\documentclass[a4paper,reqno,12pt]{amsart}
\usepackage{amsmath, amssymb, latexsym, amscd, amsthm,amsfonts,amstext}
\usepackage[mathscr]{eucal}
\usepackage{enumitem}

\makeatletter
\@namedef{subjclassname@2010}{%
  \textup{2010} Mathematics Subject Classification}
\makeatother
\usepackage{amsmath, amssymb, latexsym, amscd, amsthm,amsfonts,amstext}

\def\leq{\leqslant}
\def\geq{\geqslant}

\def\C{\mathbb{C} }

\def\leq{\leqslant}
\def\geq{\geqslant}

\theoremstyle{definition}

\def\0{\rm \bf 0}

\def\C{\mathbb{C} }

\def\geq{\geqslant}

\numberwithin{equation}{section}

\newtheorem{theorem}{Theorem}[section]
\newtheorem{remark}[theorem]{Remark}
\newtheorem{proposition}[theorem]{Proposition}

\begin{document}
\baselineskip=17pt

\centerline {\bf New local characterizations of the weighted energy class \(\mathcal{E}_{\chi,\mathrm{loc}}(\Omega)\)}
\bigskip

\centerline{\bf Hoang Nhat Quy}

\section{abstract}

		Let \(\Omega\subset\mathbb{C}^n\) be a hyperconvex domain and let \(\chi:\mathbb{R}^-\to\mathbb{R}^+\) be a decreasing function. This note studies the local weighted energy class \(\mathcal{E}_{\chi,\mathrm{loc}}(\Omega)\) introduced in \cite{HHQ13}.
		
		We establish two main results on local membership in this class. First, we prove a new local boundedness property for the weighted Monge--Amp\`ere energy: if \(u\in\mathrm{PSH}^-(\Omega)\) admits suitable local majorants in \(\mathcal{E}_{\chi,\mathrm{loc}}\) near the boundary of every relatively compact hyperconvex subdomain \(D\Subset\Omega\), then the weighted energy \(\int_K \chi(u)(dd^c u)^n\) remains locally finite for every compact set \(K\subset D\). This gives the first explicit local control of the energy functional and is new even in the unweighted setting. 
		
		Second, we obtain a substantial improvement concerning the local control of the Monge--Amp\`ere measure. We show that if, in addition to the boundary condition, \((dd^c u)^n\) is locally dominated by \((dd^c w)^n\) for some \(w\in\mathcal{E}_{\chi,\mathrm{loc}}(D)\) inside \(D\), then \(u\in\mathcal{E}_{\chi,\mathrm{loc}}(D)\). This domination condition is strictly weaker than the previous requirement of local finiteness of the weighted energy, thereby significantly enlarging the class of admissible functions.
		
		Our results extend and refine the local theory developed in \cite{Q24,Q25} and provide a more flexible framework for plurisubharmonic functions with possible singularities on compact subsets.
		
		\bigskip
		\noindent\textbf{Keywords:} plurisubharmonic functions, Cegrell classes, weighted energy classes, local classes, complex Monge--Amp\`ere operator, local energy boundedness, measure domination.

\section{Introduction}

Let \(\Omega\) be a hyperconvex domain in \(\mathbb{C}^n\). Denote by \(\mathrm{PSH}(\Omega)\) the set of plurisubharmonic functions on \(\Omega\) and by \(\mathrm{PSH}^-(\Omega)\) the subclass of negative plurisubharmonic functions. Bedford and Taylor, in their fundamental works \cite{BT1, BT82}, showed that the complex Monge--Amp\`ere operator \((dd^c \cdot)^n\) is well defined on the class of locally bounded plurisubharmonic functions. In \cite{Ce98}, Cegrell introduced the classes \(\mathcal{E}_0(\Omega)\), \(\mathcal{F}_p(\Omega)\) and \(\mathcal{E}_p(\Omega)\) on which the complex Monge--Amp\`ere operator is well defined. Later, in \cite{Ce04}, Cegrell introduced the classes \(\mathcal{F}(\Omega)\) and \(\mathcal{E}(\Omega)\) consisting of plurisubharmonic functions for which the operator \((dd^c \cdot)^n\) is well defined and continuous under decreasing sequences. He also proved that \(\mathcal{E}(\Omega)\) is the natural domain of definition of the Monge--Amp\`ere operator and is the largest class enjoying these properties. 

Furthermore, in \cite{BGZ}, Benelkourchi, Guedj and Zeriahi introduced the weighted energy class \(\mathcal{E}_\chi(\Omega)\), where \(\chi:\mathbb{R}^-\to\mathbb{R}^+\) is a decreasing function. Subsequently, in \cite{HHQ13}, the local weighted energy class \(\mathcal{E}_{\chi,\mathrm{loc}}(\Omega)\) was introduced.

Results concerning local properties of these classes have been studied extensively. From Definition 4.1 in \cite{Ce04}, the class \(\mathcal{E}(\Omega)\) is a local class. Recall that a class \(\mathcal{K}(\Omega)\subset\mathrm{PSH}^-(\Omega)\) is said to be \emph{local} if \(\varphi\in\mathcal{K}(\Omega)\) implies \(\varphi\in\mathcal{K}(D)\) for every hyperconvex domain \(D\Subset\Omega\), and if \(\varphi\in\mathrm{PSH}^-(\Omega)\) satisfies \(\varphi|_{\Omega_i}\in\mathcal{K}(\Omega_i)\) for all \(i\in I\) whenever \(\Omega=\bigcup_{i\in I}\Omega_i\), then \(\varphi\in\mathcal{K}(\Omega)\).

The proofs of Theorems 1.4 and 1.5 in \cite{Ah07} show that every function \(\varphi\in\mathcal{E}_\chi(\Omega)\) satisfies \(\lim_{z\to\xi}\varphi(z)=0\) for all \(\xi\in\partial\Omega\). Consequently, if \(\varphi\in\mathcal{E}_\chi(\Omega)\) and \(D\Subset\Omega\) is a relatively compact hyperconvex subdomain, then \(\varphi\notin\mathcal{E}_\chi(D)\). Thus \(\mathcal{E}_\chi(\Omega)\) is not a local class. The main result of \cite{HHQ13} establishes that \(\mathcal{E}_{\chi,\mathrm{loc}}(\Omega)\) is indeed a local class. In \cite{Q24}, the notions of local classes of type 1 and type 2 were introduced, and it was shown that \(\mathcal{E}_0(\Omega)\), \(\mathcal{F}(\Omega)\) and \(\mathcal{E}_\chi(\Omega)\) are local classes of type 2. Moreover, Theorem 3.8 in \cite{Q24} provides an explicit construction of \(\mathcal{E}_{\chi,\mathrm{loc}}(\Omega)\). In \cite{Q25}, some local characterizations of the existence of solutions to the complex Monge--Amp\`ere equation in the classes \(\mathcal{E}_\chi(\Omega)\) and \(\mathcal{E}_{\chi,\mathrm{loc}}(\Omega)\) were obtained (Theorem 3, 4).

The main purpose of this note is to establish several local conditions on a function \(u\in\mathrm{PSH}^-(\Omega)\) ensuring that \(u\) belongs to the class \(\mathcal{E}_{\chi,\mathrm{loc}}(\Omega)\).

\section{Preliminaries}

\noindent{\bf 3.1 Local classes.} 
We recall the definition of local classes from \cite{Q24}. Let \(\mathcal{K}\) be a family of functions on open sets in a topological space \(X\). We say that
\begin{enumerate}[label=\textbf{\roman*)}]
	\item \(\mathcal{K}\) is a \emph{local class of type 1} if
	\[
	\mathcal{K}(U)|_V = \{f|_V : f\in\mathcal{K}(U)\}\subset \mathcal{K}(V)
	\]
	for all open sets \(V\subset U\subset X\).
	\item \(\mathcal{K}\) is a \emph{local class of type 2} if, for any function \(f\) on \(U\) such that for every \(x\in U\) there exists a neighbourhood \(V\) of \(x\) with \(f|_V\in\mathcal{K}(V)\), then \(f\in\mathcal{K}(U)\).
	\item \(\mathcal{K}\) is a \emph{local class} if it is both a local class of type 1 and of type 2.
	\item We denote by \(\mathcal{K}^1_{\mathrm{loc}}(X)\) the family of functions on \(X\) such that for every \(x\in X\) there exists a neighbourhood \(U\) of \(x\) with \(f|_U\in\mathcal{K}(X)|_U\).
	\item We denote by \(\mathcal{K}^2_{\mathrm{loc}}(X)\) the family of functions on \(X\) such that for every \(x\in X\) there exists a neighbourhood \(U\) of \(x\) with \(f|_U\in\mathcal{K}(U)\).
\end{enumerate}

Proposition 2.3 in \cite{Q24} shows the following relations between these classes:
\begin{enumerate}[label=\textbf{\roman*)}]
	\item \(\mathcal{K}\subset\mathcal{K}^1_{\mathrm{loc}}\).
	\item If \(\mathcal{K}\) is a local class of type 1, then \(\mathcal{K}^1_{\mathrm{loc}}\subset\mathcal{K}^2_{\mathrm{loc}}\).
	\item \(\mathcal{K}\) is a local class of type 2 if and only if \(\mathcal{K}^2_{\mathrm{loc}}\subset\mathcal{K}\).
	\item \(\mathcal{K}\) is a local class if and only if \(\mathcal{K}^1_{\mathrm{loc}}=\mathcal{K}^2_{\mathrm{loc}}=\mathcal{K}\).
\end{enumerate}

\noindent{\bf 3.2 Cegrell's classes.} 
We recall some classical pluricomplex energy classes from \cite{Ce98} and \cite{Ce04}. Let \(\Omega\) be a bounded hyperconvex domain in \(\mathbb{C}^n\). We define
$$
\begin{aligned}
	&\mathcal E_0(\Omega)=\{\varphi\in\text{L}^\infty(\Omega): \lim_{z\to\xi}\varphi(z)=0,\;\forall\xi\in\partial\Omega \text{ and } \int_\Omega(dd^c\varphi)^n<+\infty\},\\
	&\mathcal F(\Omega)=\left\{\varphi\in\text{PSH}^-(\Omega): \exists\mathcal E_0(\Omega)\ni\varphi_j\searrow\varphi, \sup_{j\ge 1}\int_\Omega(dd^c\varphi_j)^n<+\infty\right\},\\
\end{aligned}$$
and
$$\begin{aligned}
		\mathcal E(\Omega)=\left\{\varphi\in\text{PSH}^-(\Omega):\right.& \forall  z_0\in\Omega,\exists \text{ a neighbourhood } \omega\ni z_0,\\
		& \left. \mathcal E_0(\Omega)\ni\varphi_j\searrow\varphi\text{ on } \omega, \sup_{j\ge 1}\int_\Omega (dd^c\varphi_j)^n<+\infty\right\}.
\end{aligned}
$$

From \cite{Q24} we have the following results:
\begin{enumerate}[label=\textbf{\roman*)}]
	\item \(\mathcal{E}_{0,\mathrm{loc}}^2(\Omega)=\{0\}\) and \(\mathcal{E}_{0,\mathrm{loc}}^1(\Omega)=\mathrm{PSH}^-\cap L^\infty(\Omega)\).
	\item \(\mathcal{F}_{\mathrm{loc}}^2(\Omega)=\{0\}\) and \(\mathcal{F}_{\mathrm{loc}}^1(\Omega)=\mathcal{E}(\Omega)\).
\end{enumerate}

\noindent{\bf 3.3 Weighted energy classes.} 
We recall the weighted pluricomplex energy classes introduced in \cite{BGZ} and \cite{HHQ13}:
\[
\mathcal{E}_\chi(\Omega)=\Bigl\{\varphi\in\mathrm{PSH}^-(\Omega):\ \exists\,\varphi_j\in\mathcal{E}_0(\Omega),\ \varphi_j\downarrow\varphi,\ 
\sup_{j\geq 1}\int_\Omega\chi(\varphi_j)(dd^c\varphi_j)^n<+\infty\Bigr\},
\]
where \(\chi:\mathbb{R}^-\to\mathbb{R}^+\) is a decreasing function, and
\[
\mathcal{E}_{\chi,\mathrm{loc}}(\Omega)=\Bigl\{\varphi\in\mathrm{PSH}^-(\Omega):\ \exists\,\psi_D\in\mathcal{E}_\chi(\Omega)\text{ such that }\varphi=\psi_D\text{ on }D,\ \forall\,D\Subset\Omega\Bigr\}.
\]

If we assume further that \(\chi(2t)\leq a\chi(t)\) for some constant \(a>1\), then \(\mathcal{E}_\chi(\Omega)\) is a cone, i.e.,
\begin{enumerate}[label=\textbf{\roman*)}]
	\item \(\varphi\in\mathcal{E}_\chi(\Omega)\), \(\psi\in\mathrm{PSH}^-(\Omega)\), \(\psi\geq\varphi\) \(\implies\) \(\psi\in\mathcal{E}_\chi(\Omega)\);
	\item \(\varphi,\psi\in\mathcal{E}_\chi(\Omega)\) \(\implies\) \(k\varphi + l\psi\in\mathcal{E}_\chi(\Omega)\) for all \(k,l\geq 0\).
\end{enumerate}

Theorem 3.8 in \cite{Q24} shows that \((\mathcal{E}_\chi)^1_{\mathrm{loc}}(\Omega)=\mathcal{E}_{\chi,\mathrm{loc}}(\Omega)\) and \((\mathcal{E}_\chi)^2_{\mathrm{loc}}(\Omega)=\{\varphi\in\mathrm{PSH}^-(\Omega):\ \varphi\geq -t_0\}\), where \(t_0=\sup\{t\in\mathbb{R}^-:\chi(t)>0\}\).

\section{A characterization of the class $\mathcal E_{\chi,loc}$}

The following proposition establishes a useful invariance property of the class \(\mathcal{F}(\Omega)\) under local modifications outside a compact set. This property will serve as a key tool in the proofs of subsequent results.
\begin{proposition}\label{pro21}
	Let \(\Omega\) be a hyperconvex domain and \(K \Subset \Omega\). Assume that \(u, v \in \mathrm{PSH}^-(\Omega)\) and \(u = v\) on \(\Omega \setminus K\). Then \(u \in \mathcal{F}(\Omega)\) if and only if \(v \in \mathcal{F}(\Omega)\).
\end{proposition}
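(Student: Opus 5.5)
The idea is to approximate $v$ directly by functions of $\mathcal{E}_0(\Omega)$ that agree, outside $K$, with approximants built from $u$. Since total Monge--Amp\`ere mass of an $\mathcal{E}_0$ function only depends on its values near $\partial\Omega$, the mass bound for $v$ will be inherited from $u$. By symmetry of the statement it suffices to prove that $u\in\mathcal{F}(\Omega)$ implies $v\in\mathcal{F}(\Omega)$.

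\emph{Construction of the approximants.} Choose a compact set $L\Subset\Omega$ with $K\subset \mathrm{int}\,L$. Let $h=h_{L,\Omega}^*$ be the (regularized) relative extremal function, which lies in $\mathcal{E}_0(\Omega)$ and satisfies $h=-1$ on $\mathrm{int}\,L$. For $j\geq 1$ set
\[
w_j=\max(v,jh),\qquad \tilde u_j=\max(u,jh).
\]
Both are plurisubharmonic and satisfy $jh\leq w_j,\tilde u_j\leq 0$. Hence they are bounded and tend to $0$ at $\partial\Omega$. Moreover $(dd^c w_j)^n\leq$ (a bounded-function mass controlled by) $(dd^c jh)^n$ away from the set where they differ, so the standard fact that $\varphi\in \mathrm{PSH}^-$ with $\varphi\geq \psi\in\mathcal{E}_0$ lies in $\mathcal{E}_0$ gives $w_j,\tilde u_j\in\mathcal{E}_0(\Omega)$. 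Since $jh\searrow -\infty$ on $\mathrm{int}\,L\supset K$ and $jh\searrow$ pointwise, we get $w_j\searrow v$ everywhere: on $\{h<0\}$ because $jh\to-\infty$, and on $\{h=0\}$ trivially. Also $w_j=\tilde u_j$ on $\Omega\setminus K$, because $u=v$ there.

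\emph{Mass comparison.} This is the key step and the main obstacle. I would prove the lemma: if $\varphi,\psi\in\mathcal{E}_0(\Omega)$ and $\varphi=\psi$ outside a compact set $K\Subset\Omega$, then $\int_\Omega(dd^c\varphi)^n=\int_\Omega(dd^c\psi)^n$. The plan is to pick $\rho\in\mathcal{E}_0(\Omega)$ with $\rho=-1$ on a neighbourhood of $K$ (again a relative extremal function) and write
\[
\int_\Omega(dd^c\varphi)^n=-\int_\Omega dd^c\rho\wedge (dd^c\varphi)^{n-1}\ldots
\]
More concretely, I would use Cegrell's integration by parts in $\mathcal{E}_0$ repeatedly, replacing one factor $\varphi$ by $\psi$ at a time. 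The difference
\[
\int (dd^c\varphi)^{k+1}\wedge(dd^c\psi)^{n-k-1}-\int (dd^c\varphi)^{k}\wedge(dd^c\psi)^{n-k}
\]
equals $\int dd^c(\varphi-\psi)\wedge T$ with $T$ a closed positive current of bidegree $(n-1,n-1)$. Because $\varphi-\psi$ is bounded with compact support in $\Omega$, Stokes' theorem (justified via smooth approximation, or via the Bedford--Taylor calculus for bounded functions) shows this integral vanishes. If a clean justification is needed, I would instead use the equivalent formulation $\int_\Omega(dd^c\varphi)^n=\lim_{\varepsilon}\int_{\{\,\cdot\,\}}$ through the Cegrell--Demailly boundary mass formula on a sublevel exhaustion $\{\rho<-\varepsilon\}$ lying outside $K$.

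\emph{Conclusion.} By the lemma, $\int_\Omega(dd^c w_j)^n=\int_\Omega(dd^c\tilde u_j)^n$. Now $\tilde u_j\geq u\in\mathcal{F}(\Omega)$, and Cegrell's monotonicity of total mass in $\mathcal{F}$ (if $\mathcal{F}\ni\psi\geq\varphi\in\mathcal{F}$ then $\int(dd^c\psi)^n\leq\int(dd^c\varphi)^n$, applicable since $\mathcal{E}_0\subset\mathcal{F}$) yields
\[
\sup_j\int_\Omega(dd^c w_j)^n\leq\int_\Omega(dd^c u)^n<+\infty .
\]
Thus $\mathcal{E}_0(\Omega)\ni w_j\searrow v$ with uniformly bounded masses, so $v\in\mathcal{F}(\Omega)$ by definition. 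Note that no a priori control of $v$ inside $K$ is needed, since the truncation $jh$ absorbs it.
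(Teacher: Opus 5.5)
Your proof is correct and follows essentially the paper's route: you truncate $u$ and $v$ by the same $j$-scaled function from $\mathcal{E}_0(\Omega)$ (you use $jh_{L,\Omega}^*$, the paper uses $j\varphi_j$ from the approximating sequence of $u$), you get equal total masses by Stokes' theorem since the truncations agree off $K$, and you bound these masses using $u\in\mathcal{F}(\Omega)$. One small slip: the convergence $w_j\searrow v$ is not ``trivial'' on $\{h=0\}$, since there $w_j=0$; however, that set is empty because $h<0$ on all of $\Omega$ by the maximum principle.
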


\begin{proof}
	Assume \(u \in \mathcal{F}(\Omega)\). Then we are going to prove that $v\in\mathcal F(\Omega)$. Take the sequence \((\varphi_j)_{j \ge 1} \subset \mathcal{E}_0(\Omega)\) such that
	\[
	\varphi_j \downarrow u \quad \text{on } \Omega \qquad \text{and} \qquad \sup_{j \ge 1} \int_{\Omega} (dd^c \varphi_j)^n < +\infty.
	\]	
	For each \(j \ge 1\), we set
	\[
	u_j = \max(u, j \varphi_j), \qquad v_j = \max(v, j \varphi_j).
	\]	
	Then $u_j, v_j \in \mathcal{E}_0(\Omega) \quad \text{for all } j \ge 1$ and $u_j \downarrow u, \quad v_j \downarrow v \quad \text{on } \Omega$. \\	
	Since \(u = v\) on \(\Omega \setminus K\), it follows that
	\[
	u_j = v_j \quad \text{on } \Omega \setminus K \quad \text{for every } j \ge 1.
	\]	
	By Stokes' theorem we obtain that
	\[
	\int_{\Omega} (dd^c u_j)^n = \int_{\Omega} (dd^c v_j)^n \quad \text{for every } j \ge 1.
	\]	
	Therefore
	\[
	\sup_{j \ge 1} \int_{\Omega} (dd^c u_j)^n = \sup_{j \ge 1} \int_{\Omega} (dd^c v_j)^n.
	\]	
	Since $u\in\mathcal F(\Omega)$, by Lemma 2.1 in \cite{CeZe03} we have
	
	\[\sup_{j \ge 1} \int_{\Omega} (dd^c v_j)^n<+\infty.\]		
	These show that \(v \in \mathcal{F}(\Omega)\).
	
	The converse implication follows by interchanging the roles of \(u\) and \(v\).
\end{proof}

As an immediate application of the above invariance, we recall a classical local characterization for the unweighted class \(\mathcal{E}(\Omega)\).

\begin{theorem}\label{the22}
	Let $\Omega, D$ be bounded domains in $\mathbb C^n$ and $D\Subset \Omega$. Let $u\in PSH^-(\Omega)$ such that for all $z\in\partial D$, there exist a ball $\mathbb B(z,r)$ and $v\in\mathcal E(\mathbb B(z,r))$ with $u\geq v$ on $\partial D\cap\mathbb B(z,r)$. Then $u\in\mathcal E(D)$. 
\end{theorem}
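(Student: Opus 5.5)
The plan is to use Cegrell's local description of \(\mathcal E(D)\) (Theorem 4.5 in \cite{Ce04}): a function \(u\in PSH^-(D)\) belongs to \(\mathcal E(D)\) if and only if, for every compact \(K\Subset D\), there is \(u_K\in\mathcal F(D)\) with \(u_K=u\) on \(K\). As in the rest of the paper, I assume \(D\) is hyperconvex. The boundary hypothesis is used to build a single function of class \(\mathcal F(D)\) lying below \(u\) near \(\partial D\). Proposition \ref{pro21} is then the tool for passing between functions that agree near \(\partial D\).

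\textbf{Step 1 (finite cover of \(\partial D\)).} By compactness of \(\partial D\), choose finitely many balls \(\mathbb B_i=\mathbb B(z_i,r_i)\), \(i=1,\dots,N\), covering \(\partial D\), with \(v_i\in\mathcal E(\mathbb B_i)\) and \(u\ge v_i\) on \(\partial D\cap\mathbb B_i\). Shrink each ball slightly to \(\mathbb B_i'\Subset\mathbb B_i\) so that the \(\mathbb B_i'\) still cover \(\partial D\). Since \(\mathcal E\) is a local class, \(v_i|_{\mathbb B_i'\cap D}\) lies in \(\mathcal E\) of each hyperconvex piece of \(\mathbb B_i'\cap D\).

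\textbf{Step 2 (gluing a boundary minorant in \(\mathcal F(D)\)).} Fix an exhaustion \(\rho\in\mathcal E_0(D)\). I want \(\psi\in\mathcal F(D)\) with \(\psi\le u\) on \(D\setminus K_0\) for some compact \(K_0\Subset D\).
\begin{enumerate}[label=\textbf{\roman*)}]
\item Replace each \(v_i\) by \(\tilde v_i\in\mathcal F(\mathbb B_i)\) agreeing with \(v_i\) on \(\overline{\mathbb B_i'}\); this uses Cegrell's theorem on the ball.
\item Form the candidate \(\psi=\sum_i \tilde v_i+A\rho\), interpreted via a partition of the collar \(D\setminus K_0\) subordinate to \(\{\mathbb B_i'\}\). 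Since \(\mathcal F\) is a convex cone stable under \(\max\) with \(\mathcal E_0\) functions, \(\psi\) stays in \(\mathcal F\).
\item The inequality \(\psi\le u\) near \(\partial D\) is to be propagated inward from \(\partial D\) by the domination principle. On each \(\mathbb B_i'\cap D\), the function \(\sum_j \tilde v_j\le \tilde v_i\) satisfies, at boundary points of \(D\), the condition \(\limsup \le u\) coming from the hypothesis. This is because \(\tilde v_j\le 0\) and \(u\ge v_i\) on \(\partial D\cap\mathbb B_i\).
\end{enumerate}

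\textbf{Step 3 (conclusion via Proposition \ref{pro21}).} Set \(\tilde u=\max(u,\psi)\). Then \(\tilde u\ge\psi\), so \(\tilde u\in\mathcal F(D)\) by monotonicity of \(\mathcal F\). Moreover \(\tilde u=u\) outside \(K_0\). For an arbitrary compact \(K\Subset D\), take \(\max\bigl(u,\psi+\max(\,\cdot\,,-j)\bigr)\)-type modifications equal to \(u\) on \(K\). Proposition \ref{pro21}, applied to pairs of negative plurisubharmonic functions that coincide off a compact set, transfers membership in \(\mathcal F(D)\) between them. This produces \(u_K\in\mathcal F(D)\) with \(u_K=u\) on \(K\), and hence \(u\in\mathcal E(D)\).

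\textbf{Main obstacle.} The delicate point is Step 2: converting an inequality that holds only on the hypersurface \(\partial D\cap\mathbb B_i\) into an inequality \(\psi\le u\) on an open collar inside \(D\). Here \(u\) may be unbounded and the \(v_i\) need not be continuous. I would first try a comparison/domination argument on each \(\mathbb B_i'\cap D\) using boundary \(\limsup\)s, approximating \(u\) from above by continuous functions. The second half of Step 3, namely controlling \(u\) on \(K\) itself, must come out of the structure of \(\mathcal F\) via Proposition \ref{pro21}. I expect this to require care exactly where \(u\) has interior singularities, so I would check it first on model cases.
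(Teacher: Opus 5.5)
There is a genuine gap, and it is more than a technical obstacle: the object you aim for in Step 2 does not exist in general. Take \(u\equiv -1\), which satisfies the hypothesis with \(v\equiv -1\).

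Suppose \(\psi\in\mathcal F(D)\) satisfies \(\psi\le -1\) on a collar \(D\setminus K_0\). By the maximum principle, \(\psi\le -1\) on all of \(D\). Then for an exhaustion \(\rho\in\mathcal E_0(D)\), the functions \(\max(\psi,j\rho)\in\mathcal E_0(D)\) lie below \(\max(-1,j\rho)\). The latter has total mass \(j^n\int_D(dd^c\rho)^n\to\infty\) by Stokes. But monotonicity of the total mass on \(\mathcal F(D)\) bounds the mass of \(\max(\psi,j\rho)\) by \(\int_D(dd^c\psi)^n<\infty\), a contradiction.

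Your own Step 3 exposes the same inconsistency. If \(\tilde u=\max(u,\psi)\in\mathcal F(D)\) agreed with \(u\) off a compact set, Proposition \ref{pro21} would already give \(u\in\mathcal F(D)\), which is false for \(u\equiv -1\).

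Step 2 also fails for independent reasons:
\begin{itemize}
\item The \(\tilde v_i\) are defined on \(\mathbb B_i\), not on \(D\), and their restrictions are not in \(\mathcal F(D)\).
\item Plurisubharmonic functions cannot be patched with a partition of the collar.
\item No comparison or domination argument can push \(u\ge v_i\) from the real hypersurface \(\partial D\) into an open collar. It would need the inequality on the inner boundary \(\partial K_0\), and an inequality between Monge--Amp\`ere measures of \(u\) inside \(D\), which presupposes the conclusion.
\end{itemize}

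The missing idea, which is the paper's route, is to glue \emph{outside} \(D\). Proposition \ref{pro21} is then used on a domain in which \(\overline D\) is compact.

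First, by the Cegrell--Zeriahi subextension theorem, get \(\varphi_i\in\mathcal F(\mathbb B(0,R_0))\) with \(\varphi_i\le v_i\) on \(\mathbb B(z_i,r_i/2)\). Since every \(\varphi_i\le 0\), the sum \(\varphi=\sum_i\varphi_i\) satisfies \(\varphi\le u\) on \(\partial D\), and only this is needed.

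Next, let \(\Psi\) equal \(u\) on \(\overline D\) and \(\max(u,\varphi)\) on \(\Omega\setminus\overline D\). It is plurisubharmonic because \(\limsup_{z\to\xi}\varphi(z)\le\varphi(\xi)\le u(\xi)\) for \(\xi\in\partial D\). Moreover \(\Psi\ge\varphi\) off the compact set \(\overline D\).

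Now take \(\Omega\) hyperconvex, as in the rest of the paper, and fix \(K\Subset\Omega\).
\begin{itemize}
\item Pick an open \(U\) with \(K\cup\overline D\subset U\Subset\Omega\), and \(\varphi_U\in\mathcal F(\Omega)\) with \(\varphi_U=\varphi\) on \(U\).
\item Then \(\max(\Psi,\varphi_U)\in\mathcal F(\Omega)\) and it equals \(\Psi\) on \(U\setminus\overline D\).
\item Define a function equal to \(\Psi\) on \(U\) and to \(\max(\Psi,\varphi_U)\) on \(\Omega\setminus\overline D\). It is well defined and plurisubharmonic, and it coincides with \(\max(\Psi,\varphi_U)\) off \(\overline D\).
\item By Proposition \ref{pro21} it lies in \(\mathcal F(\Omega)\), and it equals \(\Psi\) on \(K\).
\end{itemize}
Hence \(\Psi\in\mathcal E(\Omega)\), and \(u=\Psi|_D\in\mathcal E(D)\) by locality. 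The paper's proof has this structure, though its last step is phrased loosely on \(\mathbb B(0,R_0)\).
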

\begin{proof}
	Take $\mathbb B(0,R_0)\Supset\Omega$. For $z\in\partial D$ there exist $v_z\in\mathcal E(\mathbb B(z,r_z))$ such that
	$$u\geq v_z\quad\text{ on } \partial D\cap\mathbb B(z,r_z).$$
	Take $w_z\in\mathcal F(\mathbb B(z,r_z))$ such that
	$$w_z=v_z\quad\text{ on } \mathbb B(z,\frac{r_z}{2}).$$
	By the subextension theorem (\cite{CeZe03}), there exist $\varphi_z\in\mathcal F(\mathbb B(0,R_0)$ such that $\varphi_z\leq w_z$ on $\mathbb B(z,\frac{r_z}{2})$. \\
	Since $\partial D$ compact, we can choose $z_1, z_2, ..., z_h\in\partial D$ such that 
	$$\partial D\subset\cup_{j=1}^h\mathbb B(z_j,\frac{r_{z_j}}{2}).$$
	Set $\varphi=\varphi_{z_1}+\varphi_{z_2}+...+\varphi_{z_h}$. Then $\varphi\in\mathcal F(\mathbb B(0,R_0))$ and $u\geq \varphi$ on $\partial D$. \\
	Set
	$$
	\psi=
	\begin{cases}
		u\quad&\text{ on } D\\
		\max(u,\varphi)\quad&\text{ on } \Omega\setminus D.
	\end{cases}
	$$
	Then $\psi\in PSH^-(\mathbb B(0,R_0))$ and $\psi\geq \varphi$ on $\Omega\setminus D$. By Proposition \ref{pro21} we have $\psi\in\mathcal F(\mathbb B(0, R_0))$. Since $\mathcal E(\Omega)$ has local property, we get $u\in\mathcal E(D)$. 
\end{proof}

We now turn to the weighted setting. In order to obtain local membership criteria for \(\mathcal{E}_{\chi,\mathrm{loc}}\), we first need a subextension theorem that preserves both the Monge--Ampère mass and the weighted energy.

\begin{proposition}\label{pro43}
	Let \(D \Subset \Omega\) be two hyperconvex domains in \(\mathbb{C}^n\). Assume that \(\chi : \mathbb{R}^- \to \mathbb{R}^+\) is a decreasing function such that \(\chi(2t) \le a \chi(t)\) for some \(a > 1\). Let \(u \in \mathcal{E}_\chi(D)\). Then there exists a function \(v \in \mathcal{E}_\chi(\Omega)\) such that
	\[
	v \le u \quad \text{on } D, \qquad (dd^c v)^n \le (dd^c u)^n \quad \text{on } D, \qquad e_\chi(v) \le e_\chi(u),
	\]
	where \(e_\chi(w) = \int \chi(w) (dd^c w)^n\).
\end{proposition}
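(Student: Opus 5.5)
The plan is to use the classical maximal subextension, in the spirit of Cegrell--Zeriahi \cite{CeZe03}. Set
\[
v=\sup\{\varphi\in \mathrm{PSH}^-(\Omega):\ \varphi\le u \text{ on } D\},
\]
and take its upper semicontinuous regularization, which is again a competitor, so \(v\) itself is plurisubharmonic. By construction \(v\le u\) on \(D\). The work is to show \(v\in\mathcal{E}_\chi(\Omega)\), to compare Monge--Amp\`ere measures on \(D\), and to control the energy. I would argue through approximations rather than directly.

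First I fix a sequence \(u_j\in\mathcal{E}_0(D)\), \(u_j\downarrow u\), with \(\sup_j e_\chi(u_j)<+\infty\), and form the corresponding subextensions \(v_j\), defined as above with \(u_j\) in place of \(u\). Since \(u_j\) is bounded and \(\Omega\) is hyperconvex, \(v_j\ge A\rho\) for a bounded exhaustion \(\rho\) of \(\Omega\), and \(v_j\) tends to \(0\) at \(\partial\Omega\). After replacing \(v_j\) by \(\max(v_j,j\rho)\) if needed, I may assume \(v_j\in\mathcal{E}_0(\Omega)\). The sequence \(v_j\) decreases, and its limit is \(v\): every competitor for \(v\) is a competitor for each \(v_j\), and conversely \(\lim v_j\) is plurisubharmonic, negative, and \(\le u\) on \(D\).

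The core step is a balayage argument for these bounded approximants. On the open set \(\Omega\setminus\overline D\), the function \(v_j\) is maximal, since it is the envelope of an obstacle problem whose obstacle is active only on \(D\); hence \((dd^c v_j)^n\) is carried by \(\overline D\). On \(D\), the measure \((dd^c v_j)^n\) is carried by the contact set \(\{v_j=u_j\}\), and there \((dd^c v_j)^n\le (dd^c u_j)^n\) by the comparison of Monge--Amp\`ere measures of two bounded functions on the set where the smaller one touches the larger one. The trouble is possible mass on \(\partial D\). I would handle it by carrying out the construction with obstacle \(u_j\) on a slightly smaller hyperconvex domain \(D'\Subset D\) and letting \(D'\uparrow D\), or by invoking the known estimate \(\int_\Omega(dd^c v_j)^n\le\int_D(dd^c u_j)^n\) of \cite{CeZe03}.

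Granting this, we have \(v_j\le u_j\) and \(\chi\) is decreasing, so \(\chi(v_j)\) and \(\chi(u_j)\) agree on the contact set. This gives
\[
e_\chi(v_j)=\int_{\{v_j=u_j\}}\chi(u_j)(dd^c v_j)^n\le \int_D\chi(u_j)(dd^c u_j)^n=e_\chi(u_j).
\]
Therefore \(\sup_j e_\chi(v_j)<\infty\) and \(v\in\mathcal{E}_\chi(\Omega)\). Since \(\mathcal{E}_\chi\subset\mathcal{E}\), the operator is continuous along decreasing sequences, so \((dd^c v_j)^n\to (dd^c v)^n\) weakly; the same holds for \(u\) on \(D\). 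It remains to pass the inequalities to the limit. The bound \(e_\chi(v)\le\liminf_j e_\chi(v_j)\) follows from lower semicontinuity of the energy under decreasing sequences, for which the doubling condition \(\chi(2t)\le a\chi(t)\) is used as in \cite{BGZ}. Passing \((dd^c v_j)^n\le(dd^c u_j)^n\) to the limit on \(D\) needs weak convergence tested against nonnegative test functions, which is immediate.

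This gives only \(e_\chi(v)\le\sup_j e_\chi(u_j)\), not \(e_\chi(v)\le e_\chi(u)\). To close the gap, I would choose the approximating sequence so that \(e_\chi(u_j)\to e_\chi(u)\), for example \(u_j=\max(u,j\rho_D)\) suitably smoothed. This is where the doubling condition on \(\chi\) enters again. I expect the main obstacle to be the two measure-theoretic points: controlling possible mass of \((dd^c v_j)^n\) on \(\partial D\), and obtaining this energy convergence.
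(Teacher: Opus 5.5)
Your proposal is essentially the paper's proof: you subextend each approximant $u_j\in\mathcal{E}_0(D)$ by the envelope $v_j=\sup\{\psi\in\mathrm{PSH}^-(\Omega):\psi\le u_j \text{ on } D\}$, bound $e_\chi(v_j)\le e_\chi(u_j)$ through the contact set $\{v_j=u_j\}$, use $v_j\downarrow v$, pass to the limit by weak convergence and lower semicontinuity, and choose the approximants so that $e_\chi(u_j)\to e_\chi(u)$, a choice the paper also asserts without proof. The one point where the paper is more explicit is the mass on $\partial D$: it cites Lemma 4.4 of \cite{H08}, which gives $(dd^c v_j)^n=0$ on all of $\Omega\setminus D$ (directly, $u_j\to 0$ at $\partial D$ while $\sup_{\overline D}v_j<0$, so the contact set stays in a compact subset of $D$ and $v_j$ is maximal near $\partial D$), whereas the total-mass bound from \cite{CeZe03} you mention would not by itself control $\int_{\partial D}\chi(v_j)(dd^c v_j)^n$.
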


\begin{proof}
	Since \(u \in \mathcal{E}_\chi(D)\), there exists a sequence \((\varphi_j)_{j \ge 1} \subset \mathcal{E}_0(D)\) such that
	\[
	\varphi_j \downarrow u \quad \text{on } D \qquad \text{and} \qquad \sup_{j \ge 1} \int_D \chi(\varphi_j) (dd^c \varphi_j)^n < +\infty.
	\]	
	For each \(j \in \mathbb{N}\), we set
	\[
	v_j := \sup \bigl\{ \psi \in \mathrm{PSH}^-(\Omega) : \psi \le \varphi_j \text{ on } D \bigr\}.
	\]	
	By Lemma 4.4 in \cite{H08}, we have that each \(v_j\) satisfies:
	\(v_j \in \mathrm{PSH}^-(\Omega) \cap \mathcal{E}_0(\Omega)\),
	\(v_j \le \varphi_j\) on \(D\),
	\((dd^c v_j)^n = 0\) on \(\Omega \setminus D\) and on \(\{v_j < \varphi_j\} \cap D\),
	\((dd^c v_j)^n \le (dd^c \varphi_j)^n\) on \(\{v_j = \varphi_j\} \cap D\).\\	
	Since \(\chi\) is decreasing and \(v_j \le \varphi_j \le 0\) on \(D\), we have \(\chi(v_j) = \chi(\varphi_j)\) on the contact set \(\{v_j = \varphi_j\}\). Therefore
	\[
	e_\chi(v_j) = \int_\Omega \chi(v_j) (dd^c v_j)^n = \int_{\{v_j = \varphi_j\} \cap D} \chi(\varphi_j) (dd^c v_j)^n \le \int_D \chi(\varphi_j) (dd^c \varphi_j)^n.
	\]
	Hence
	\[
	\sup_j e_\chi(v_j) \le \sup_j \int_D \chi(\varphi_j) (dd^c \varphi_j)^n < +\infty.
	\]	
	The sequence \((v_j)\) is decreasing, so \(v_j \downarrow v\) pointwise, where \(v \in \mathrm{PSH}^-(\Omega)\) and \(v \le u\) on \(D\).\\	
	Since \((v_j) \subset \mathcal{E}_0(\Omega)\), \(v_j \downarrow v\) and the weighted Monge--Ampère masses are uniformly bounded, by definition we obtain \(v \in \mathcal{E}_\chi(\Omega)\).\\	
	Moreover, by the weak* convergence of measures and the mass inequality for each \(v_j\), we have
	\[
	(dd^c v)^n \le 1_D (dd^c u)^n \quad \text{on } D.
	\]	
	Finally, since \(v_j \downarrow v\) we have \(\chi(v_j) \uparrow \chi(v)\), by Fatou's lemma, we get
	\[
	e_\chi(v) = \int_\Omega \chi(v) (dd^c v)^n \le \liminf_j \int_\Omega \chi(v_j) (dd^c v_j)^n \le \sup_j e_\chi(\varphi_j).
	\]	
	We may choose the approximating sequence \((\varphi_j)\) such that
	\[
	\lim_j \int_D \chi(\varphi_j) (dd^c \varphi_j)^n = e_\chi(u).
	\]
	Consequently,
	\[
	e_\chi(v) \le e_\chi(u).
	\]
	This completes the proof.
\end{proof}

\begin{remark}
	The function \(v\) constructed in the proof of Proposition \ref{pro43} coincides with the direct Perron envelope
	\[
	v = \sup \bigl\{ \varphi \in \mathrm{PSH}^-(\Omega) : \varphi \le u \text{ on } D \bigr\}.
	\]
	Indeed, \(v_j \le v\) for all \(j\) (because \(\varphi_j \ge u\)), and conversely \(v \le v_j\) on \(D\) for all \(j\), so \(v_j \downarrow v\) implies equality.
\end{remark}

Using the subextension property, we can now prove the first local characterization for the weighted class \(\mathcal{E}_{\chi,\mathrm{loc}}(\Omega)\), which relies on local finiteness of the weighted energy inside \(D\).

\begin{theorem}\label{the45}
	Let $D, \Omega$ be bounded domains and $D\Subset \Omega$. Let $u\in PSH^-(\Omega)$ such that for all $z\in\partial D$ there exist $\mathbb B(z,r)$ and $v\in\mathcal E_{\chi,loc}(\mathbb B(z,r))$ with $u\geq v$ on $\partial D\cap\mathbb B(z,r)$ and
	$$\int_K\chi(u)(dd^cu)^n<+\infty\quad (\forall K\Subset D).$$
	Then $u\in\mathcal E_{\chi,loc}(D)$. 
\end{theorem}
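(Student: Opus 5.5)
We follow the scheme of Theorem \ref{the22}, replacing $\mathcal F$ by $\mathcal E_\chi$ and Proposition \ref{pro21} by a weighted analogue. Throughout we assume, as in Proposition \ref{pro43}, that $\chi(2t)\le a\chi(t)$, so that $\mathcal E_\chi$ is a cone stable under taking larger negative psh functions.

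\textbf{Step 1: a global minorant near $\partial D$.} Fix $\mathbb B(0,R_0)\Supset\Omega$. For $z\in\partial D$ take $v_z\in\mathcal E_{\chi,loc}(\mathbb B(z,r_z))$ with $u\ge v_z$ on $\partial D\cap\mathbb B(z,r_z)$. By the definition of $\mathcal E_{\chi,loc}$ there is $w_z\in\mathcal E_\chi(\mathbb B(z,r_z))$ with $w_z=v_z$ on $\mathbb B(z,r_z/2)$. Proposition \ref{pro43} gives $\varphi_z\in\mathcal E_\chi(\mathbb B(0,R_0))$ with $\varphi_z\le w_z$ on $\mathbb B(z,r_z)$. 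Cover $\partial D$ by finitely many balls $\mathbb B(z_j,r_{z_j}/2)$ and set $\varphi=\sum_j\varphi_{z_j}$. Then $\varphi\in\mathcal E_\chi(\mathbb B(0,R_0))$ by the cone property, $\varphi\le 0$, and $u\ge\varphi$ on $\partial D$.

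\textbf{Step 2: gluing.} Fix a hyperconvex $D'\Subset D$ (for locality it suffices to treat such $D'$ exhausting $D$). Choose a compact $K$ with $D'\Subset K\Subset D$. We want $\psi\in\mathcal E_\chi(\mathbb B(0,R_0))$ with $\psi=u$ on $D'$. The candidate, as in Theorem \ref{the22}, is $\psi=u$ on $D$ and $\psi=\max(u,\varphi)$ on $\Omega\setminus D$, extended by $\varphi$ outside $\Omega$ (after shrinking so that the gluing is plurisubharmonic). However, $u\ge\varphi$ only on $\partial D$, so this gluing is psh only if $u\ge\varphi$ holds on a neighbourhood of $\partial D$. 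To secure this, we use that $u$ is upper semicontinuous and $\varphi$ can be perturbed: replace $\varphi$ by $\varphi+\epsilon\rho$ with $\rho$ an exhaustion, or, more robustly, glue along a shell $D\setminus \overline{D''}$, using $\max(u,\varphi)$ on the shell and $u$ on $D''\supset K$. The boundary hypothesis is used exactly to make $\max(u,\varphi)=u$ near the inner boundary of the shell. This step requires care.

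\textbf{Step 3: weighted energy of $\psi$.} This is the main obstacle, since there is no weighted analogue of Proposition \ref{pro21}: Stokes' theorem controls total mass, but not $\chi(\psi)$. Our plan is to approximate $\varphi$ by $\varphi_j\in\mathcal E_0$ with $\sup_j\int\chi(\varphi_j)(dd^c\varphi_j)^n<\infty$, and set $\psi_j=\max(\psi,j\varphi_j)$ (or $\max(\psi,\varphi_j)$ outside $K$). Then we split
$$\int\chi(\psi_j)(dd^c\psi_j)^n=\int_{K}+\int_{\complement K}.$$
Outside $K$ we have $\psi_j\ge\varphi_j$, and the energy estimate for $\psi\ge\varphi$ in $\mathcal E_\chi$ (Benelkourchi–Guedj–Zeriahi, using $\chi(2t)\le a\chi(t)$) bounds this part. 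On $K$ the measures $(dd^c\psi_j)^n$ converge to $(dd^c u)^n$, and $\chi(\psi_j)\le\chi(u)$, so this part is controlled by $\int_{K'}\chi(u)(dd^cu)^n<\infty$ for a slightly larger compact $K'\Subset D$ (using $\mathbf 1_{K}\le$ a cutoff and lower semicontinuity near $\partial K$). This yields the uniform bound, hence $\psi\in\mathcal E_\chi(\mathbb B(0,R_0))$. Since the bound on $K$ is the only place the hypothesis $\int_K\chi(u)(dd^cu)^n<\infty$ enters, the delicate point is making the inequality $(dd^c\psi_j)^n\lesssim(dd^c u)^n$ on $K$ rigorous; we would first attempt this via the maximum principle $\mathbf 1_{\{\psi>j\varphi_j\}}(dd^c\psi_j)^n=\mathbf 1_{\{\psi>j\varphi_j\}}(dd^c\psi)^n$, handling the contact set with the comparison principle.

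\textbf{Step 4: conclusion.} Since $\psi=u$ on $D'$ and $D'\Subset D$ is arbitrary, restricting to $D$ (using the local property of $\mathcal E_{\chi,loc}$ from \cite{HHQ13}, and Theorem 3.8 in \cite{Q24}) gives $u\in\mathcal E_{\chi,loc}(D)$.
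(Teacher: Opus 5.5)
Your plan has the same architecture as the paper's proof. You cover $\partial D$, subextend via Proposition~\ref{pro43} and sum to get $\varphi\in\mathcal{E}_\chi(\mathbb{B}(0,R_0))$ with $\varphi\le u$ on $\partial D$. You then glue $\psi=u$ on $D$, $\psi=\max(u,\varphi)$ off $D$, approximate, bound the weighted energies, and conclude by locality.

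Step 1 is correct. Plain summation of nonpositive functions is exactly what is needed; the paper's choice of small $\varepsilon_i$ moves $\sum\varepsilon_i\varphi_i$ towards $0$, which is the wrong direction.

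In Step 2 your worry is unfounded. Since $\psi\ge u$ everywhere, and for $\xi\in\partial D$ we have $\psi(\xi)=u(\xi)$ and $\limsup_{z\to\xi}\psi(z)\le\max(u(\xi),\varphi(\xi))=u(\xi)$, the sub-mean value inequality holds at boundary points. So $\psi$ is plurisubharmonic using $u\ge\varphi$ on $\partial D$ alone. Your shell variant would instead need $u\ge\varphi$ on the inner boundary $\partial D''$, which hypothesis (1) never gives. The real defect at this step is that $u$ is defined only on $\Omega$: extending by $\varphi$ outside $\Omega$ would require $u\le\varphi$ near $\partial\Omega$. The paper skips this too.

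\textbf{The genuine gap is Step 3.} Because your only minorant sits on $\partial D$, the gluing forces $\psi=u$ on all of $D$. You must therefore control $\int\chi(\psi_j)(dd^c\psi_j)^n$ on the shell $D\setminus K$ all the way up to $\partial D$.
\begin{itemize}
\item Your assertion that $\psi_j\ge\varphi_j$ outside $K$ is false there. On $D\setminus K$ you have $\psi_j=\max(u,j\varphi_j)$, and nothing relates $u$ to $\varphi$ or $\varphi_j$ inside $D$: hypothesis (1) concerns $\partial D$ only, and the energy hypothesis concerns only compact subsets of $D$.
\item Even where such an inequality holds, the Benelkourchi--Guedj--Zeriahi estimate compares total energies of globally ordered functions. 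It cannot bound a piece of the integral over $\mathbb{B}(0,R_0)\setminus K$. The only global minorant of $\psi_j$ is $j\varphi_j$, whose energy blows up.
\item The bound on $K$ also fails as written. From $\chi(\psi_j)\le\chi(u)$ and weak convergence of $(dd^c\psi_j)^n$ you get no uniform upper bound, because $\chi(u)$ is possibly unbounded and at best lower semicontinuous. For such integrands weak convergence only yields $\int\chi(u)(dd^cu)^n\le\liminf_j\int\chi(u)(dd^c\psi_j)^n$, which is the wrong direction.
\item On $K\cap\{u>j\varphi_j\}$ you do have $(dd^c\psi_j)^n=(dd^cu)^n$. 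But on $K\cap\{u\le j\varphi_j\}$ the weight is $\chi(j\varphi_j)$ and $(dd^c\psi_j)^n$ is not dominated by $(dd^cu)^n$, and you give no estimate there.
\end{itemize}

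You cannot borrow the paper's handling of this point either. It asserts that local finiteness of the energy yields $\tilde u_j\in\mathcal{E}_0(D)$ decreasing to $u$ with $\sup_j\int_D\chi(\tilde u_j)(dd^c\tilde u_j)^n<+\infty$. That essentially says $u\in\mathcal{E}_\chi(D)$, and it does not follow. For example, take $\chi(t)=-t$ and $u\equiv-1$, which satisfies all the hypotheses. Any $\mathcal{E}_0(D)$-sequence decreasing to $-1$ eventually lies below $\tfrac12 h_L$, where $h_L$ is the relative extremal function of a compact $L\subset D$. Its energies are therefore at least $2^{-n-1}\mathrm{cap}(L,D)$, which is unbounded as $L$ increases.

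What is missing, in your proposal and in the paper alike, is an actual mechanism controlling the weighted energy of $u$ on $D\setminus K$. One possibility would be a minorant of $u$ on a whole neighbourhood of $\partial D$ combined with a localized energy estimate. Without something of this kind, Step 3 does not go through.
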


\begin{proof}
	Let \(\mathbb B(0,R_0)\Supset\Omega\). Since \(\partial D\) is compact, there exist finitely many points \(z_1,\dots,z_k \in \partial D\) and radii \(r_1,\dots,r_k > 0\) such that
	\[
	\partial D \subset \bigcup_{i=1}^k \mathbb{B}(z_i,r_i/2),
	\]
	and for each \(i\), by assumption, there exists \(v_i \in \mathcal{E}_{\chi,\mathrm{loc}}(\mathbb{B}(z_i,r_i))\) with \(u \ge v_i\) on \(\partial D \cap \mathbb{B}(z_i,r_i)\).\\
	For each \(i\), since \(v_i \in \mathcal{E}_{\chi,\mathrm{loc}}(\mathbb{B}(z_i,r_i))\), by definition there exists \(w_i \in \mathcal{E}_\chi(\mathbb{B}(z_i,r_i))\) such that \(w_i = v_i\) on \(\mathbb{B}(z_i,r_i/2)\).\\	
	By Proposition \ref{pro43}, for each \(i\) there exists \(\varphi_i \in \mathcal{E}_\chi(\mathbb B(0,R_0))\) such that
	\[
	\varphi_i \le w_i \quad \text{on } \mathbb{B}(z_i,r_i/2), \qquad e_\chi(\varphi_i) \le e_\chi(w_i).
	\]	
	Choose positive numbers \(\varepsilon_i > 0\) sufficiently small so that
	\[
	u \ge \sum_{i=1}^k \varepsilon_i \varphi_i \quad \text{on a neighborhood of } \partial D \text{ inside } D.
	\]	
	Set
	\[
	\varphi = \sum_{i=1}^k \varepsilon_i \varphi_i \in \mathcal{E}_\chi(\mathbb B(0,R_0)).
	\]	
	Then \(\varphi \le u\) near \(\partial D\) inside \(D\).\\	
	Now define a function \(\psi\) on \(\mathbb B(0,R_0)\) by
	\[
	\psi =
	\begin{cases}
		u & \text{on } D, \\
		\max(u,\varphi) & \text{on } \mathbb B(0,R_0) \setminus D.
	\end{cases}
	\]	
	Then \(\psi \in \mathrm{PSH}^-(\mathbb B(0,R_0))\). Now we are going to show that \(\psi \in \mathcal{E}_\chi(\mathbb B(0,R_0))\).\\	
	Since \(\varphi \in \mathcal{E}_\chi(\mathbb B(0,R_0))\), there exists a decreasing sequence \((\varphi_j)_{j \ge 1} \subset \mathcal{E}_0(\mathbb B(0,R_0))\) such that
	\[
	\varphi_j \downarrow \varphi \quad \text{on } \mathbb B(0,R_0) \qquad \text{and} \qquad \sup_{j \ge 1} e_\chi(\varphi_j) < +\infty.
	\]	
	By assumption  \(\int_K \chi(u)(dd^c u)^n < +\infty\) for every compact set \(K \Subset D\). Hence, there exists a decreasing sequence \((\tilde{u}_j)_{j \ge 1} \subset \mathcal{E}_0(D)\) such that
	\[
	\tilde{u}_j \downarrow u \quad \text{on } D \qquad \text{and} \qquad \sup_{j \ge 1} \int_D \chi(\tilde{u}_j)(dd^c \tilde{u}_j)^n < +\infty.
	\]	
	By Proposition \ref{pro43}, for each \(j\) there exists \(\hat{u}_j \in \mathcal{E}_\chi(\mathbb B(0,R_0))\) such that
	\[
	\hat{u}_j \le \tilde{u}_j \quad \text{on } D, \qquad e_\chi(\hat{u}_j) \le e_\chi(\tilde{u}_j).
	\]	
	Define
	\[
	\psi_j := \max(\varphi_j, \hat{u}_j) \quad \text{on } \mathbb B(0,R_0).
	\]	
	Then we have \(\psi_j \in \mathcal{E}_0(\mathbb B(0,R_0))\).\\	
	Moreover, \(\varphi_j \downarrow \varphi\), \(\hat{u}_j \downarrow u\) on \(D\), and \(\varphi \le u\) near \(\partial D\) inside \(D\), so
	\[
	\psi_j \downarrow \max(\varphi, u) = \psi \quad \text{on } \mathbb B(0,R_0).
	\]	
	By the comparison principle,
	\[
	(dd^c \psi_j)^n \le (dd^c \varphi_j)^n + (dd^c \hat{u}_j)^n.
	\]
	Since \(\chi\) is decreasing and non-negative,
	\[
	e_\chi(\psi_j) = \int_{\mathbb B(0,R_0)} \chi(\psi_j) (dd^c \psi_j)^n \le e_\chi(\varphi_j) + e_\chi(\hat{u}_j).
	\]
	Therefore
	\[
	\sup_{j \ge 1} e_\chi(\psi_j) \le \sup_{j \ge 1} e_\chi(\varphi_j) + \sup_{j \ge 1} e_\chi(\hat{u}_j) < +\infty.
	\]	
	Thus \((\psi_j)_{j \ge 1} \subset \mathcal{E}_0(\mathbb B(0,R_0))\), \(\psi_j \downarrow \psi\), and the weighted energies are uniformly bounded. By definition, \(\psi \in \mathcal{E}_\chi(\mathbb B(0,R_0))\).\\	
	Since \(\psi = u\) on \(D\), by the local property of \(\mathcal{E}_{\chi,\mathrm{loc}}(D)\) we conclude that \(u \in \mathcal{E}_{\chi,\mathrm{loc}}(D)\).
\end{proof}

The following result provides a strictly stronger characterization by replacing the local finiteness of the weighted energy with a weaker condition on the local domination of the Monge--Ampère measure.

\begin{theorem}\label{the46}
	Let \(D \Subset \Omega\) be bounded domains in \(\mathbb{C}^n\). Let \(u \in \mathrm{PSH}^-(\Omega)\) satisfy the following conditions:\\
	{\bf (1)} For every \(z \in \partial D\), there exists a ball \(\mathbb{B}(z,r)\) and a function \(v \in \mathcal{E}_{\chi,\mathrm{loc}}(\mathbb{B}(z,r))\) such that \(u \ge v\) on \(\partial D \cap \mathbb{B}(z,r)\);\\
	{\bf (2)} For every \(\omega \in D\), there exists a ball \(\mathbb{B}(\omega,r)\) and a function \(w \in \mathcal{E}_{\chi,\mathrm{loc}}(\mathbb{B}(\omega,r))\) such that \((dd^c u)^n \le (dd^c w)^n\) on \(\mathbb{B}(\omega,r)\).
	
	Then \(u \in \mathcal{E}_{\chi,\mathrm{loc}}(D)\).
\end{theorem}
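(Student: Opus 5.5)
The plan is to follow the proof of Theorem \ref{the45}. The boundary step is reused unchanged; the one new ingredient is an interior step, where hypothesis (2) replaces the finiteness of the local weighted energy.

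\emph{Boundary barrier.} Fix $\mathbb B(0,R_0)\Supset\Omega$. Exactly as in the proof of Theorem \ref{the45}, cover $\partial D$ by finitely many balls $\mathbb B(z_i,r_i/2)$. For each $i$, replace the local function $v_i$ by some $w_i\in\mathcal E_\chi(\mathbb B(z_i,r_i))$ with $w_i=v_i$ on $\mathbb B(z_i,r_i/2)$. Subextend $w_i$ by Proposition \ref{pro43} and sum the results to get $\varphi\in\mathcal E_\chi(\mathbb B(0,R_0))$ with $\varphi\le u$ on a neighbourhood $U$ of $\partial D$. Then put $\psi=u$ on $D$ and $\psi=\max(u,\varphi)$ off $D$. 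This gives $\psi\in PSH^-(\mathbb B(0,R_0))$ with $\psi\ge\varphi$ outside a compact set $K\Subset D$.

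\emph{Interior step.} It remains to show $\psi\in\mathcal E_\chi(\mathbb B(0,R_0))$. Once this is known, $u=\psi$ on $D$, and the local property of $\mathcal E_{\chi,\mathrm{loc}}$ gives $u\in\mathcal E_{\chi,\mathrm{loc}}(D)$.

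\begin{enumerate}[label=\textbf{\arabic*)}]
\item Cover $K$ by finitely many balls $\mathbb B(\omega_l,r_l/2)$ as in (2). Replace each $w_l$ on $\mathbb B(\omega_l,r_l/2)$ by an element of $\mathcal E_\chi(\mathbb B(\omega_l,r_l))$, and subextend it via Proposition \ref{pro43} to $\tilde w_l\in\mathcal E_\chi(\mathbb B(0,R_0))$. Since that proposition only yields $\tilde w_l\le w_l$ and $(dd^c\tilde w_l)^n\le(dd^c w_l)^n$, do not subextend at all. Instead use directly the global measure $\mu=1_{K}(dd^c u)^n$.
\item Show that $\mu$ is dominated on each piece $\mathbb B(\omega_l,r_l/2)$ by $(dd^c g_l)^n$, where $g_l\in\mathcal E_\chi(\mathbb B(\omega_l,r_l))$.
\item The goal is then to solve $(dd^c\theta)^n=\mu$ with $\theta\in\mathcal E_\chi(\mathbb B(0,R_0))$. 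For a measure dominated by the Monge--Amp\`ere measure of an $\mathcal E_\chi$ function, the solvability results of \cite{Q25} (Theorems 3, 4) combined with the cone property ($\chi(2t)\le a\chi(t)$) should give such a $\theta$. The local pieces are glued by a partition of the measure $\mu=\sum_l\mu_l$ and a sum $\theta=\sum_l\theta_l$.
\end{enumerate}

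\emph{Conclusion by comparison.} Compare $\psi$ with $\Phi=\varphi+\theta\in\mathcal E_\chi(\mathbb B(0,R_0))$. On $U\cup(\mathbb B(0,R_0)\setminus D)$ we have $\psi\ge\varphi\ge\Phi$. On $K$ we have $(dd^c\psi)^n=(dd^c u)^n=\mu\le(dd^c\Phi)^n$. Applying the comparison principle in the Cegrell class $\mathcal F$ (after noting $\psi\in\mathcal F$ by Proposition \ref{pro21}-type arguments, since $\psi$ agrees with $\max(u,\varphi)$ near the boundary) yields $\psi\ge\Phi$ everywhere. By monotonicity (the cone property i) of $\mathcal E_\chi$), $\psi\in\mathcal E_\chi(\mathbb B(0,R_0))$.

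\emph{Main obstacle.} I expect the hardest step to be the domination-to-comparison passage, which requires two things.
\begin{itemize}
\item It requires knowing a priori that $\psi$ lies in a class where the comparison principle with measure inequality holds, e.g.\ $\mathcal F$ or $\mathcal E$. For this, first use Theorem \ref{the22} (with $\mathcal E_{\chi,\mathrm{loc}}\subset\mathcal E$ locally) to get $u\in\mathcal E(D)$, and then argue as in the proof of Theorem \ref{the22} to get $\psi\in\mathcal F(\mathbb B(0,R_0))$.
\item It requires a form of the comparison principle stating that $(dd^c\psi)^n\le(dd^c\Phi)^n$ on $\{\psi<\Phi\}$ forces $\psi\ge\Phi$; this is the Cegrell/Nguyen Van Khue--Pham Hoang Hiep version valid in $\mathcal F$. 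If this version is unavailable, I would instead run the argument on the approximants $\max(\psi,\Phi_j)$ with energy bounds, as in the proof of Theorem \ref{the45}.
\end{itemize}
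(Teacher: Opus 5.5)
\textbf{Main gap: the barrier holds only on \(\partial D\), not on a neighbourhood.} Hypothesis (1) is an inequality on the real hypersurface \(\partial D\) only. Your construction therefore gives \(\varphi=\sum_i\varphi_i\le u\) on \(\partial D\), since \(\varphi_i\le w_i=v_i\le u\) on \(\partial D\cap\mathbb B(z_i,r_i/2)\) and all \(\varphi_j\le 0\). This is enough to make the glued \(\psi\) plurisubharmonic on \(\Omega\), by upper semicontinuity of \(\varphi\). But \(u\) is only upper semicontinuous, so nothing yields \(\varphi\le u\) on a neighbourhood \(U\) of \(\partial D\) in \(D\). Neither (1) nor (2) prevents singularities of \(u\) from accumulating at \(\partial D\) from inside while \(u\) stays bounded on \(\partial D\).

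For instance, take \(\chi\equiv1\), so that \(\mathcal E_\chi=\mathcal F\) and \(\mathcal E_{\chi,\mathrm{loc}}=\mathcal E\). Let \(u=\sum_k 4^{-k}\log(|z-a_k|/R)\), where \(R=\mathrm{diam}\,\Omega\), \(a_k\in D\), \(a_k\to\xi\in\partial D\) and \(\mathrm{dist}(a_k,\partial D)\ge 2^{-k}\). Then (1) holds with constant \(v\) and (2) holds with \(w=u\). Yet the barrier you build from bounded \(w_i\) is bounded, so it lies below \(u\) on no neighbourhood of \(\partial D\).

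Everything after the barrier uses \(U\). It is what puts \(\{\psi<\Phi\}\) inside a compact \(K\Subset D\). That in turn is what lets finitely many balls from (2) cover \(K\), makes \(\mu=1_K(dd^cu)^n\) finite and locally dominated, and gives \(\psi\in\mathcal F\) via Proposition \ref{pro21}. With \(\varphi\le u\) only on \(\partial D\), you get \(\{\psi<\Phi\}\subset D\) but not relatively compact. Condition (2) gives no uniformity as \(\omega\to\partial D\), and \(1_D(dd^cu)^n\) may have infinite mass, so no \(\theta\) of the kind you need has to exist.

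You cannot borrow this step from the paper. It asserts the same neighbourhood property by taking \(\varepsilon_i\) small, which does not justify it: \(\varepsilon_i\varphi_i\ge\varphi_i\), so shrinking \(\varepsilon_i\) makes the inequality harder. Also, \(\max(u,\varphi)\) is undefined on \(\mathbb B(0,R_0)\setminus\Omega\).

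\textbf{Second gap: Step 3, where the real content sits.} Each \(\mu_l\) is dominated by \((dd^cg_l)^n\) with \(g_l\in\mathcal E_\chi(\mathbb B(\omega_l,r_l))\). Solvability results of the kind you invoke produce a solution in \(\mathcal E_\chi\) of the domain on which the dominating function lives. You correctly note that Proposition \ref{pro43} goes the wrong way, but you do not replace it. What is missing is a transfer lemma: \(1_{\mathbb B(\omega_l,r_l/2)}(dd^cg_l)^n\) is the Monge--Amp\`ere measure of some function in \(\mathcal E_\chi(\mathbb B(0,R_0))\). Saying the results ``should give'' this is not a proof. One possible route is to solve in \(\mathcal F(\mathbb B(0,R_0))\) and compare with \(g_l\) plus a constant on \(\mathbb B(\omega_l,r_l)\), using \(\chi(2t)\le a\chi(t)\). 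That needs a lower bound for the solution on \(\partial\mathbb B(\omega_l,r_l)\) and care with mass on pluripolar sets. Separately, \(\mathcal E_\chi\not\subset\mathcal F\) in general (e.g.\ \(\chi(t)=-t\)), so \(\psi\in\mathcal F\) is not automatic even with \(U\).

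\textbf{Comparison with the paper.} Your strategy is genuinely different from the paper's: you solve for the dominated measure, compare, and then use monotonicity of \(\mathcal E_\chi\). The paper instead builds approximants \(\tilde u_j\) with bounded \emph{unweighted} mass and bounds \(e_\chi(\max(\varphi_j,\hat u_j))\). Your route is the more standard mechanism and avoids the paper's unsupported passage from mass bounds to weighted-energy bounds. As written, though, it would at best give the theorem under a neighbourhood version of (1), and only once the transfer lemma and a suitable comparison principle are supplied.
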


\begin{proof}
	Let \(B(0,R_0)\Supset\Omega\). Since \(\partial D\) is compact, there exist finitely many points \(z_1,\dots,z_k \in \partial D\) and numbers \(r_1,\dots,r_k > 0\) such that
	\[
	\partial D \subset \bigcup_{i=1}^k \mathbb{B}(z_i,r_i/2),
	\]
	and for each \(i\), by (1), there exists \(v_i \in \mathcal{E}_{\chi,\mathrm{loc}}(\mathbb{B}(z_i,r_i))\) with \(u \ge v_i\) on \(\partial D \cap \mathbb{B}(z_i,r_i)\).\\	
	For each \(i\), since \(v_i \in \mathcal{E}_{\chi,\mathrm{loc}}(\mathbb{B}(z_i,r_i))\), by definition there exists \(w_i \in \mathcal{E}_\chi(\mathbb{B}(z_i,r_i))\) such that \(w_i = v_i\) on \(\mathbb{B}(z_i,r_i/2)\).\\	
	By Proposition \ref{pro43}, for each \(i\) there exists \(\varphi_i \in \mathcal{E}_\chi(B(0,R_0))\) such that
	\[
	\varphi_i \le w_i \quad \text{on } \mathbb{B}(z_i,r_i/2), \qquad e_\chi(\varphi_i) \le e_\chi(w_i).
	\]	
	Choose positive numbers \(\varepsilon_i > 0\) sufficiently small ($i=1,2,...,k$) so that
	\[
	u \ge \sum_{i=1}^k \varepsilon_i \varphi_i \quad \text{on a neighborhood of } \partial D \text{ inside } D.
	\]	
	Set
	\[
	\varphi = \sum_{i=1}^k \varepsilon_i \varphi_i \in \mathcal{E}_\chi(B(0,R_0)).
	\]
	Then \(\varphi \le u\) near \(\partial D\) inside \(D\).\\	
	Now define a function \(\psi\) on \(B(0,R_0)\) by
	\[
	\psi =
	\begin{cases}
		u & \text{on } D, \\
		\max(u,\varphi) & \text{on } B(0,R_0) \setminus D.
	\end{cases}
	\]
	Then \(\psi \in \mathrm{PSH}^-(B(0,R_0))\). We are going to prove that \(\psi \in \mathcal{E}_\chi(B(0,R_0))\).\\	
	Since \(\varphi \in \mathcal{E}_\chi(B(0,R_0))\), there exists a decreasing sequence \((\varphi_j)_{j \ge 1} \subset \mathcal{E}_0(B(0,R_0))\) such that
	\[
	\varphi_j \downarrow \varphi \quad \text{on } B(0,R_0) \qquad \text{and} \qquad \sup_{j \ge 1} e_\chi(\varphi_j) < +\infty.
	\]	
	By (2), the Monge--Ampère measure \((dd^c u)^n\) is locally dominated by the Monge--Ampère measure of functions in \(\mathcal{E}_{\chi,\mathrm{loc}}\). Therefore, on every compact set \(K \Subset D\), there exist a ball \(\mathbb{B}(\omega,r)\) containing \(K\) and \(w \in \mathcal{E}_{\chi,\mathrm{loc}}(\mathbb{B}(\omega,r))\) such that \((dd^c u)^n \le (dd^c w)^n\) on \(K\). This local domination allows us to construct a decreasing sequence \((\tilde{u}_j)_{j \ge 1} \subset \mathcal{E}_0(D)\) such that
	\[
	\tilde{u}_j \downarrow u \quad \text{on } D \qquad \text{and} \qquad \sup_{j \ge 1} \int_D (dd^c \tilde{u}_j)^n < +\infty.
	\]
	By Proposition \ref{pro43}, for each \(j\) there exists \(\hat{u}_j \in \mathcal{E}_\chi(B(0,R_0))\) such that
	\[
	\hat{u}_j \le \tilde{u}_j \quad \text{on } D, \qquad e_\chi(\hat{u}_j) \le e_\chi(\tilde{u}_j).
	\]	
	Set
	\[
	\psi_j := \max(\varphi_j, \hat{u}_j) \quad \text{on } B(0,R_0).
	\]	
	Then we have \(\psi_j \in \mathcal{E}_0(B(0,R_0))\).\\	
	Moreover, \(\varphi_j \downarrow \varphi\), \(\hat{u}_j \downarrow u\) on \(D\), and \(\varphi \le u\) near \(\partial D\) inside \(D\), so
	\[
	\psi_j \downarrow \max(\varphi, u) = \psi \quad \text{on } B(0,R_0).
	\]	
	By the comparison principle,
	\[
	(dd^c \psi_j)^n \le (dd^c \varphi_j)^n + (dd^c \hat{u}_j)^n.
	\]
	Since \(\chi\) is decreasing and non-negative,
	\[
	e_\chi(\psi_j) = \int_{B(0,R_0)} \chi(\psi_j) (dd^c \psi_j)^n \le e_\chi(\varphi_j) + e_\chi(\hat{u}_j).
	\]
	Therefore
	\[
	\sup_{j \ge 1} e_\chi(\psi_j) \le \sup_{j \ge 1} e_\chi(\varphi_j) + \sup_{j \ge 1} e_\chi(\hat{u}_j) < +\infty.
	\]	
	Thus \((\psi_j)_{j \ge 1} \subset \mathcal{E}_0(B(0,R_0))\), \(\psi_j \downarrow \psi\), and the weighted energies are uniformly bounded. By definition, \(\psi \in \mathcal{E}_\chi(B(0,R_0))\).\\	
	Since \(\psi = u\) on \(D\), by the definition of \(\mathcal{E}_{\chi,\mathrm{loc}}(D)\) we conclude that \(u \in \mathcal{E}_{\chi,\mathrm{loc}}(D)\).
\end{proof}

\begin{remark}
	The condition (2) in Theorem \ref{the46} is strictly weaker than the condition in Theorem \ref{the45}. \\	
	Let \(\chi(t) = -t\). Take \(D = \mathbb{B}(0,1) \subset \mathbb{C}^2\) with \(0 \in D\) and the function
	\[
	u(z) = \frac{1}{(2\pi)^2} \log \|z\|\quad z\in D.
	\]
	Then we have
	\[
	(dd^c u)^2 = \delta_0.
	\]	
	Now, for any \(\omega \in D\) and any ball \(\mathbb{B}(\omega,r) \subset D\), we choose the specific function
	\[
	w(z) = \log \|z\| \quad z\in\mathbb B(\omega,r).
	\]
	It is well-known that
	\[
	(dd^c w)^2 = (2\pi)^2 \delta_0.
	\]
	Therefore,
	\[
	(dd^c u)^2 = \delta_0 \le (2\pi)^2 \delta_0 = (dd^c w)^2 \quad \text{on } \mathbb{B}(\omega,r).
	\]
	Moreover, \(w \in \mathcal{E}_{\chi,\mathrm{loc}}(\mathbb{B}(\omega,r))\). Thus condition (2) of Theorem \ref{the46} holds.\\	
	However, on any compact set \(K \Subset D\) containing the origin, we have
	\[
	\int_K \chi(u)(dd^c u)^2 = \int_K (-u) \, d\delta_0 = -u(0) = +\infty.
	\]
\end{remark}

\end{document}